\documentclass[12pt]{article}

\usepackage{fullpage}
\usepackage{amssymb}
\usepackage{amsmath}
\usepackage[usenames]{color}
\usepackage{graphicx}
\usepackage{amsthm}
\usepackage{amsfonts}
\usepackage[colorlinks=true,
linkcolor=webgreen,
filecolor=webbrown,
citecolor=webgreen]{hyperref}

\definecolor{webgreen}{rgb}{0,.5,0}
\definecolor{webbrown}{rgb}{.6,0,0}
\definecolor{red}{rgb}{1,0,0}

\usepackage{color}
\newtheorem{theorem}{Theorem}

\newtheorem{lemma}[theorem]{Lemma}
\newtheorem{proposition}[theorem]{Proposition}
\newtheorem{Observation}[theorem]{Observation}

\theoremstyle{definition}
\newtheorem{definition}[theorem]{Definition}

\theoremstyle{remark}
\newtheorem{remark}[theorem]{Remark}
\theoremstyle{conj}

\theoremstyle{he}

\newcommand{\ord}{\mbox{ord}}

\begin{document}

\title{\bf On prime factors of Mersenne numbers}
\author{Ady Cambraia Jr\footnote{Departamento de Matem\'{a}tica, Universidade Federal de Vi\c cosa, Vi\c cosa-MG 36570-900, Brazil}, Michael P. Knapp\footnote{Department of Mathematics and Statistics, Loyola University Maryland, 4501 North Charles Street, Baltimore MD 21210-2699, USA}, Ab\'{i}lio Lemos$^{*}$,\\
 B. K. Moriya$^{*}$ and Paulo H. A. Rodrigues\footnote{Instituto de Matemática e Estatística, Campus Samambaia, CP 131, CEP 74001-970, Goiânia, Brazil}\\
 ady.cambraia@ufv.br\\
 mpknapp@loyola.edu\\
 abiliolemos@ufv.br\\
 bhavinkumar@ufv.br\\
 paulo\_rodrigues@ufg.br}
\maketitle

	\begin{abstract}
Let $(M_n)_{n\geq0}$ be the Mersenne sequence defined by $M_n=2^n-1$. Let $\omega(n)$ be the number of distinct prime divisors of $n.$ In this short note, we present a description of the Mersenne numbers satisfying $\omega(M_n)\leq3$. Moreover, we prove that the inequality, given $\epsilon>0$, $\omega(M_n)> 2^{(1-\epsilon)\log\log n} -3
$ holds for almost all positive integers $n$. Besides, we present the integer solutions $(m,n,a)$ of the equation $M_m+M_n=2p^a$ with $m,n\geq2$, $p$ an odd prime number and $a$ a positive integer.
\end{abstract}

\noindent 2010 {\it Mathematics Subject Classification}: 11A99, 11K65, 11A41.\\
\noindent {\it Keywords}: Mersenne numbers, arithmetic functions, prime divisors.

\maketitle

\section{Introduction}

Let $(M_n)_{n\geq0}$ be the {\it Mersenne sequence} defined by $M_n=2^n-1$, for $n\geq0$. A simple argument shows that if $M_n$ is a prime number, then $n$ is a prime number. When $M_n$ is a prime number, it is called a Mersenne prime. Throughout history, many researchers sought to find Mersenne primes. Some tools are very important for the search for Mersenne primes, mainly the Lucas-Lehmer test. There are papers (see for example \cite{Br,Eh,Wa}) that seek to describe the prime factors of $M_n$, where $M_n$ is a composite number and $n$ is a prime number.

Additionally, some papers seek to describe prime divisors of the Mersenne number $M_n$, where $n$ is a composite number (see for example \cite{FLS,MP,Po,Sc,St}). In this paper, we propose to investigate the function $\omega(n)$, which refers to the number of distinct prime divisors of $n$, applied to $M_n$. We prove that if $n\neq2,6$ and $p_1^{\alpha_1}\cdots p_s^{\alpha_s}|n$, where the $\alpha_i'$s are positive integers, $p_1,\dots, p_s$ are distinct prime numbers and $\sum_{i=1}^{s}\alpha_i=t$, then $\omega(M_{n})\geq t$ if  $s=1$, and 
$$\omega(M_{n})\geq t+\displaystyle\sum_{i=2}^{s}{s \choose i}, \mbox{ if }  s > 1.$$
This lower bound is sharp for some cases.  For example, $\omega(M_{4})=\omega(M_{9})=\omega(M_{49})=2$ and $\omega(M_{8})=\omega(M_{27})=\omega(M_{10})=\omega(M_{14})=\omega(M_{15})=\omega(M_{21})=3$.

The equations $F_n=y^q$ and $L_n=y^q,$ where $q$ is a prime number, $F_n$ is a Fibonacci number and $L_n$ is a Lucas number, have been studied by several authors.  One may see, for example, \cite {ch,lj1,lj2,lj3,lj4,lf,pe}. The complete solution for this problem was obtained by Bugeaud, Mignotte, and Siksek \cite{bms}), by combining the classical approach to exponential Diophantine equations (linear forms in logarithms, Thue equations, etc.) with a modular approach based on some of the ideas of the proof of Fermat's Last Theorem. When we consider $M_n=y^q,$ we see that this is equivalent to the equation $2^n-y^q=1$. The solution in this case is much simpler, since the Catalan Conjecture (proved by Mih{\v a}ilescu \cite{Mi} in 2002) tells us that the only solution of the equation $x^m-y^n=1$, with $m,n>1$ and $x,y>0$ is $x=3,m=2,y=2,n=3$. This motivates us to raise the next question in the flavour of these studies.
We believe that the next step is to study equations of the form $ M_m+M_n=2y^q.$ Here we consider the equation $ M_m+M_n=2p^a,$ where $p$ is an odd prime and $a\in\mathbb{N}$. To the best of our knowledge, this equation has not been studied anywhere in the literature. If $p\equiv 1 \pmod 4$, we prove that the only solutions of the underlined equation are of the form $(m,n,a) = (2,2^b+1,1),$ where $p=2^{2^b}+1$. If $p\equiv 3 \pmod 4$, there is more than one possible form of the solution. Among others, one possible case could be $ (m,n,a)=(k+1,n,a)$ if $p^a+1>2^k.$

\section{Preliminary results}

We investigate $\omega(M_{n}),$ the number of distinct prime divisors of $M_n.$ We start by stating some well-known facts and results. The first result is the well-known Theorem XXIII of \cite{Ca}, obtained by Carmichael.

\begin{theorem}\label{L1}
If $n\neq1,2,6$, then $M_n$ has a prime divisor which does not divide any $M_m$ for $0<m<n$. Such a prime is called a primitive divisor of $M_n$.  
\end{theorem}
We also need the following results:
\begin{equation} \label{L2}
d=\gcd(m,n) \Rightarrow \gcd(M_m,M_n)=M_d
\end{equation}
 
\begin{proposition}\label{P1}
If $1<m<n$, $\gcd(m,n)=1$ and $mn\neq6$, then $\omega(M_{mn})>\omega(M_m)+\omega(M_n)$.
\end{proposition}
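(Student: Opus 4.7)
The plan is to combine Lemma \ref{L1} with the gcd identity \eqref{L2} to show that $M_m M_n$ is a proper divisor of $M_{mn}$ that misses at least one prime.

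First I would observe that because $m$ and $n$ both divide $mn$, the numbers $M_m$ and $M_n$ both divide $M_{mn}$. Next, using \eqref{L2} together with $\gcd(m,n)=1$, I get $\gcd(M_m,M_n)=M_1=1$. Thus $M_m$ and $M_n$ are coprime divisors of $M_{mn}$, so their product $M_m M_n$ also divides $M_{mn}$, and
\[
\omega(M_m M_n)=\omega(M_m)+\omega(M_n).
\]

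Second, I would invoke Lemma \ref{L1}. The hypotheses $1<m<n$ force $mn\geq 6$, and the assumption $mn\neq 6$ together with $mn\neq 1,2$ puts us in the range where Lemma \ref{L1} applies. Hence $M_{mn}$ admits a primitive prime divisor $p$, i.e.\ a prime with $p\mid M_{mn}$ but $p\nmid M_k$ for every $0<k<mn$. In particular $p\nmid M_m$ and $p\nmid M_n$, so $p\nmid M_m M_n$. Combining this with the previous step yields
\[
\omega(M_{mn})\geq \omega(M_m M_n)+1=\omega(M_m)+\omega(M_n)+1>\omega(M_m)+\omega(M_n),
\]
which is the desired strict inequality.

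The argument is essentially a two-line assembly once the right tools are identified; there is no serious obstacle. The only point requiring care is the book-keeping on the excluded cases: one must verify that the hypotheses $1<m<n$, $\gcd(m,n)=1$, $mn\neq 6$ indeed rule out all the exceptional values $\{1,2,6\}$ appearing in Lemma \ref{L1}, which they do since $1<m<n$ already gives $mn\geq 6$.
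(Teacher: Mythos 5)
Your proposal is correct and follows essentially the same route as the paper: deduce $\gcd(M_m,M_n)=1$ from \eqref{L2}, then apply Lemma \ref{L1} to produce a primitive prime of $M_{mn}$ not dividing $M_mM_n$. In fact you are slightly more careful than the paper, since you explicitly note that $M_mM_n$ divides $M_{mn}$ (so that every prime of $M_m$ and $M_n$ is already counted in $\omega(M_{mn})$), a step the paper leaves implicit.
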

\begin{proof}
As $\gcd(m,n)=1$, it follows that $\gcd(M_m,M_n)=1$ by \eqref{L2}. Now, according to Theorem \ref{L1},
we have a prime number $p$ such that $p$ divides $M_{mn}$ and $p$ does not divide $M_mM_n$. Therefore, the proof of proposition is completed.
\end{proof}

Mih{\v a}ilescu \cite{Mi} in his proof of the famous Catalan Conjecture proved the following.
\begin{theorem}\label{L3}
The only solution of the equation $x^m-y^n=1$, with $m,n>1$ and $x,y>0$ is $x=3,m=2,y=2,n=3$.
\end{theorem}

For $x=2$, Theorem \ref{L3} ensures that there is no $m>1$, such that $2^m-1=y^n$ with $n>1$. 

\begin{lemma}\label{L4}
Let $p,q$ be prime numbers. Then, 
\begin{enumerate}
\item [(i)] $M_p\nmid\left(M_{pq}/M_p\right)$, if $2^p-1\nmid q;$
\item [(ii)] $M_p\nmid\left(M_{p^3}/M_p\right)$.
\end{enumerate}
\end{lemma}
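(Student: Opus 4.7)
The plan is to write $M_{pm}/M_p$ as a geometric sum and reduce it modulo $M_p$. From the factorization $x^m - 1 = (x-1)(x^{m-1} + x^{m-2} + \cdots + 1)$ applied to $x = 2^p$, we get the identity
\[
\frac{M_{pm}}{M_p} \;=\; \sum_{i=0}^{m-1} 2^{ip}.
\]
Since $2^p \equiv 1 \pmod{M_p}$, every term of this sum is congruent to $1$ modulo $M_p$, and hence
\[
\frac{M_{pm}}{M_p} \;\equiv\; m \pmod{M_p}.
\]
This single congruence is the engine behind both parts.

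For part (i), take $m = q$ above. The congruence gives $M_p \mid M_{pq}/M_p$ if and only if $M_p \mid q$, i.e.\ $2^p - 1 \mid q$. The hypothesis $2^p-1 \nmid q$ is precisely the negation, so (i) follows.

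For part (ii), take $m = p^2$. Then $M_p \mid M_{p^3}/M_p$ if and only if $M_p \mid p^2$, and I would then rule this out by a size comparison together with checking the two small primes by hand. Concretely, for $p=2$ we have $M_p = 3$ and $p^2 = 4$, and for $p=3$ we have $M_p = 7$ and $p^2 = 9$, so divisibility fails. For $p \geq 5$, a short induction (base $2^5 = 32 > 25$; inductive step uses $2(p+1)^2 \le 4p^2$ for $p \ge 3$) gives $2^p - 1 > p^2$, so $M_p$ cannot divide the nonzero integer $p^2$.

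The only step with any subtlety is the finite case analysis in (ii); there is no real obstacle, since once the geometric-series reduction collapses the problem to "does $M_p$ divide $p^2$?", the answer is negative for trivial growth reasons. No appeal to Lemma~\ref{L1}, Proposition~\ref{P1}, or Lemma~\ref{L3} is needed here.
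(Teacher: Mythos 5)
Your proof is correct and follows essentially the same route as the paper: both reduce the cofactor $M_{pm}/M_p=\sum_{i=0}^{m-1}2^{ip}$ modulo $M_p$ to see that divisibility forces $M_p\mid q$ (resp. $M_p\mid p^2$), and then dispose of part (ii) by checking $p=2,3$ and using $2^p-1>p^2$ for $p\geq 5$. Your use of $2^p\equiv 1\pmod{M_p}$ to collapse the sum to $m$ in one step is just a cleaner packaging of the paper's explicit Euclidean-division computation.
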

\begin{proof}
$(i)$ We note that $M_{pq}=(2^p-1)(\sum_{k=0}^{q-1}2^{kp})$. Thus, if $(2^p-1)|(\sum_{k=0}^{q-1}2^{kp})$, then 
$$
(2^p-1)\Big|\left(\sum_{k=0}^{q-1}2^{kp}+2^p-1\right)=2^{p+1}\left(2^{pq-2p-1}+\cdots+2^{p-1}+1\right).
$$
Since $2^{pq-2p-1}+\cdots+2^{p-1}+1 \equiv (q-2)2^{p-1}+1 \pmod{2^p -1}$, we have $(2^p-1)|\left((q-2)2^{p-1}+1\right)$.  Therefore,
$$
(2^p-1)|\left((q-2)2^{p-1}+1+(2^p-1)\right)=2^{p-1}q,
$$
i.e., $2^p-1|q$.  Therefore, the proof of $(i)$ is completed.\\
The proof of $(ii)$ is analogous to the proof of $(i)$.

\end{proof}
\begin{remark}\label{L5}
It is known that all divisors of $M_p$ have the form $q=2lp+1$, where $p$ is an odd prime number and $l\equiv 0\mbox{ or }-p\pmod4$.
\end{remark}
The first part of Remark \ref{L5} was first obtained by Euler (see Theorema 9 and Corollarium 5 in \cite{Eu}). Euler does not actually write the coefficient 2, but clearly as the expression must be odd we may do so without loss of generality. 

The modern proof of this result is as follows. Let $q_1$ be a prime divisor of $q.$ By Fermat's little theorem, $q_1$ is a factor of $2^{q_1-1}-1.$ Since $q_1$ is a factor of $2^p-1,$ and $p$ is prime, it follows that $p$ is a factor of $q_1-1$ so $q_1 \equiv 1 \pmod p.$ Furthermore, since $q_1$ is a factor of $2^p-1,$ which is odd, we know that $q_1$ is odd. Therefore, $q_1 \equiv 1 \pmod{2p}.$ But, since this result is true for all prime divisors of $q,$ i.e., for $q_1=2l_1p+1$ and $q_2=2l_2p+1$ prime divisors of $q$ not necessarily distinct, we have $q_1q_2=2l_{12}p+1,$ where $l_{12}=2l_1l_2p+l_1+l_2.$ Applying recursively for all prime divisors of $q,$ we get $q=2pl+1,$ for some $l\in\mathbb{N}.$   

Now, the second statement is a consequence of the fact that all prime divisors of $q$ are congruent to $\pm1\pmod8.$ Let us prove it here. We have $2^{p+1}\equiv 2 \pmod{q_1},$ where $q_1$ is a prime divisor of $q.$ Thus, $2^{\frac{1}{2}(p+1)}$ is a square root of 2 modulo $q_1.$ By the theory of quadratic residues, we have $q_1\equiv\pm1\pmod 8.$ Since this result is true for all prime divisors of $q,$ we have $q=2lp+1\equiv \pm1\pmod 8,$ for some $l\in\mathbb{N}.$ Now, if $2lp+1\equiv \pm1\pmod 8,$ then $lp\equiv 0\pmod 4$ or $lp\equiv 3\pmod 4.$ The first case implies $l\equiv 0\pmod4,$ since $p$ is odd. The second case implies $l\equiv -p\pmod4,$ since $-p^2\equiv 3\pmod4.$

%%%%%%%%%%%%%%%%%%%%%%%%%%%%%%%%%%%%%%%%%%%%%%%%%%%%%%%%%%%%%%%%%%%%%%%%%%%%%%%%%%%%%%%%%%%%%%%%%%%%%%
%%%%%%%%%%%%%%%%%%%%%%%%%%%%%%%%%%%%%%%%%%%%%%%%%%%%%%%%%%%%%%%%%%%%%%%%%%%%%%%%%%%%%%%%%%%%%%%%%%%%%%
\section{Mersenne numbers rarely have few prime factors.}

We present below a result which is a consequence of Manea's Theorem, which we shall state next.  We shall get the multiplicity $v_q(M_n)$ for a given odd prime $q$ and a positive integer $n.$

\begin{definition}
Let $n$ be a positive integer. The $q$-adic order of $n,$ denoted by $v_q(n),$ is defined to be the natural number $l$ such that $q^l\mid\mid n,$ i.e., $n=q^lm$ with $\gcd(q,m)=1.$ 
\end{definition}

\begin{lemma}[Theorem 1 \cite{man}]\label{man}
Let $a$ and $b$ be two distinct integers, $p$ be a prime number that does not divide $ab$, and $n$ be a positive integer. Then

\begin{enumerate}
\item if $p\neq 2$ and $p|a-b$, then
\[v_p(a^n-b^n)=v_p(n)+v_p(a-b);\]
\item if $n$ is odd, $a+b\neq 0$ and $p|a+b$, then \[v_p(a^n+b^n)=v_p(n)+v_p(a+b).\]
\end{enumerate}
\end{lemma}

\begin{definition}
For a given prime number $q$ and an integer $a\in \{1,\ldots, q-1\}$ the number $\ord_q(a)$ is defined to be the smallest positive integer $t$ such that $a^t\equiv 1\pmod q$.
\end{definition}

\begin{proposition}\label{t7}
Let $q \neq 2$ be a prime number.  Define $m = \ord_q(2)$ and $w = v_q(2^m - 1)$.  Let $n \in \mathbb{N}$, and write $n = q^l n_0$, with $\gcd(q,n_0) = 1$.  Then
\[  v_q(M_n) = v_q(2^n - 1) = 
\begin{cases}
0 & \mbox{if $m \nmid n$}\\
l + w & \mbox{if $m | n$.}
\end{cases}  \]
\end{proposition}

\begin{proof}
By elementary number theory, we know that $2^n \equiv 1 \pmod{q}$ if and only if $\ord_q(2) | n$.  This proves the first line of the formula.\\

Now, suppose that $m|n$ and write $n = mt$.  Then we have
\[  M_n  =  (2^m)^t - 1^t.  \]
\noindent By Theorem \ref{man} (with $a = 2^m$ and $b=1$), we have
\begin{eqnarray*}
v_q(M_n)  &  =  &  v_q(t) + v_q(2^m - 1)\\
  &  =  &  l + w.
\end{eqnarray*}
\noindent  This completes the proof.
\end{proof}

We will prove the following result for a lower bound of $\omega(M_n).$ 

\begin{theorem}\label{T14}
Let $\epsilon$ be a positive number. The inequality
$$
\omega(M_n)> 2^{(1-\epsilon)\log\log n} -3
$$
holds for almost all positive integer $n$. 
\end{theorem}

\begin{theorem}[Theorem 432, \cite{HW}] \label{L6}
Let $d(n)$ be the total number of divisors of $n$. If $\epsilon$ is a positive number, then 
$$
2^{(1-\epsilon)\log\log n}< d(n)< 2^{(1+\epsilon)\log\log n}
$$
for almost all positive integer $n$. 
\end{theorem}
  
\begin{proof}
(Proof of Theorem \ref{T14}).
According to Theorem \ref{L1}, we know that if $h|n$ and $h\neq1,2,6$, then $M_h$ has a prime primitive factor. This implies that
$$
\omega(M_n)\geq d(n)-3
$$ 
Consequently, by Theorem \ref{L6}, we have
$$
\omega(M_n)> 2^{(1-\epsilon)\log\log n} -3
$$
for almost all positive integer $n$.   
\end{proof}

%%%%%%%%%%%%%%%%%%%%%%%%%%%%%%%%%%%%%%%%%%%%%%%%%%%%%%%%%%%%%%%%%%%%%%%%%%%%%%%%%%%%%%%%%%%%%%%%%%%%%%%%%
%%%%%%%%%%%%%%%%%%%%%%%%%%%%%%%%%%%%%%%%%%%%%%%%%%%%%%%%%%%%%%%%%%%%%%%%%%%%%%%%%%%%%%%%%%%%%%%%%%%%%%%%%

\section{Mersenne numbers with $\omega(M_n)\leq3$}

In this section, we will characterize $n$ for $\omega(M_n)=1,2,3.$ 

\begin{theorem}\label{T1}
The only solutions of the equation
$$
\omega(M_n)=1 
$$
are given by $n$, where either $n=2$ or $n$ is an odd prime for which $M_n$ is a prime number of the form $2ln+1$, where $l\equiv 0\,\,\mbox{or}\,\,-n\,\,\pmod 4$.
\end{theorem}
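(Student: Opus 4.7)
The plan is to show that $\omega(M_n)=1$ is equivalent to $M_n$ being prime, and then invoke Remark \ref{L5} to conclude the stated form. So I would start by assuming $\omega(M_n)=1$ for some $n\geq 2$ (the case $n=1$ gives $M_1=1$ with no prime divisors and is discarded). Then $M_n=q^k$ for some prime $q$ and some integer $k\geq 1$.

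Next, I would argue that $k=1$. Indeed, if $k\geq 2$, then $2^n-q^k=1$ with $n\geq 2$ and $k\geq 2$, which by Mih\u ailescu's theorem (Lemma \ref{L3}) would force $(2,n,q,k)=(3,2,2,3)$; since our base is $2$ rather than $3$, this case is impossible. Hence $k=1$ and $M_n=q$ is prime. The author already remarks in the introduction that $M_n$ prime forces $n$ prime (the standard factorization $2^{ab}-1=(2^a-1)(2^{a(b-1)}+\cdots+1)$ gives a nontrivial divisor whenever $n$ is composite), so $n=p$ is prime.

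Finally, since every prime divisor of $M_p$ has the form $2lp+1$ with $l\equiv 0$ or $-p\pmod 4$ by Remark \ref{L5}, and since $M_p$ itself is a prime divisor of $M_p$, the conclusion $M_p=2lp+1$ with the stated congruence on $l$ follows immediately. Conversely, any such $n=p$ with $M_p$ prime obviously satisfies $\omega(M_n)=1$, which closes the equivalence.

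I do not anticipate a serious obstacle: the whole argument reduces to a clean application of Mih\u ailescu to rule out higher prime powers, followed by the classical structure theorems already assembled in the preliminaries. The only delicate point is making sure the edge cases ($n=1$, and the $n=2$ sub-case where $M_2=3$ is prime and trivially of the form $2\cdot 1\cdot 2+1$ with $l=1\equiv -2\pmod 4$) are compatible with the statement; this is a verification rather than a genuine difficulty.
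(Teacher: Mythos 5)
Your proposal is correct and follows essentially the same route as the paper: rule out $M_n=q^k$ with $k\geq 2$ via Mih\u ailescu (Lemma \ref{L3}), conclude $M_n$ is prime so $n$ is prime, and invoke Remark \ref{L5} for the form $2lp+1$. (One tiny slip in your edge-case aside: for $n=2$ one has $l=1$, which is \emph{not} $\equiv -2\pmod 4$; the paper sidesteps this by simply declaring the case $n=2$ obvious and handling it separately.)
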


\begin{proof}
The case $n=2$ is obvious. For $n$ odd, the equation implied is $M_n=q^m$, with $m\geq1$. However, according to Theorem \ref{L3}, $M_n\neq q^m$, with $m\geq2$. Thus, if there is a unique prime number $q$ that divides $M_n$, then $M_n=q$, and $q=2ln+1$, where $l\equiv 0\,\,\mbox{or}\,\,-n\,\,\pmod 4$, according to Remark \ref{L5}. 
\end{proof}

\begin{proposition}\label{P2}
Let $p_1,p_2, \dots, p_s$ be distinct prime numbers and $n$ a positive integer such that $n\neq2,6$. If $p_1^{\alpha_1}\cdots p_s^{\alpha_s}|n$, where the $\alpha_i'$s are positive integers and $\sum_{i=1}^{s}\alpha_i=t$, then
$$\omega(M_{n})\geq\left\{\begin{array}{lcc}
t, & \mbox{if} & s=1\\
t+\displaystyle\sum_{i=2}^{s}{s \choose i}, & \mbox{if} & s > 1\end{array}\right..$$
\end{proposition}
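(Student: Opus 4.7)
The plan is to produce $t+1$ pairwise distinct prime divisors of $M_n$ by walking up a chain of divisors of $d := p_1^{\alpha_1}\cdots p_s^{\alpha_s}$ and applying Lemma~\ref{L1} at each level. Writing the prime factors of $d$ with multiplicity as a sequence, I form a chain $1 = d_0 \mid d_1 \mid \cdots \mid d_t = d$ with $d_i/d_{i-1}$ prime at every step. Since $d \mid n$, property \eqref{L2} ensures $M_{d_0} \mid M_{d_1} \mid \cdots \mid M_{d_t} \mid M_n$. If the chain meets the bad value $d_i = 6$ (which can only occur if both $2$ and $3$ appear among the $p_j$), I reorder the prime factors so that at the step where $2$ and $3$ would first combine, a different factor is inserted first (another copy of $2$ when $\alpha_j\geq 2$ for the relevant $j$, or another prime factor $p_k$), making $d_i \neq 6$. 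The unique $d$ for which this reordering is impossible is $d = 6$ itself, which must be handled separately by noting that the primes $3$ and $7$ both divide $M_6$.

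Applying Lemma~\ref{L1} at each level $d_i$ with $d_i \notin \{1,2,6\}$ produces a primitive prime $q_i \mid M_{d_i}$ that divides no $M_m$ with $m < d_i$. These primes are automatically pairwise distinct, since $q_i \nmid M_{d_j}$ whenever $j<i$. For the boundary case $d_1 = 2$ (formally excluded by Lemma~\ref{L1}), one simply uses $3 = M_2$ itself as the primitive prime, which divides no earlier $M_m$ since $M_1 = 1$. This yields $t$ distinct primes dividing $M_d$, hence $M_n$. The $(t{+}1)$-th prime is then the Carmichael primitive divisor of $M_n$ itself: since $n \neq 1,2,6$, Lemma~\ref{L1} supplies a prime $q \mid M_n$ with $q \nmid M_m$ for $m < n$, and in particular $q$ differs from every $q_i$ since $q_i \mid M_{d_i}$ with $d_i \leq d$, so whenever $d_i < n$ the prime $q$ cannot coincide with $q_i$.

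The main technical obstacle I anticipate is the degenerate case $d = n$, where the chain already ends at $n$ and the ``extra'' primitive divisor of $M_n$ is indistinguishable from $q_t$. To cover this, one exploits the structure of $n$ more finely: if $n = p^\alpha$ is a prime power, one invokes Lemma~\ref{L4} (whose point is precisely that $M_{p^\alpha}/M_p$ cannot be a power of $M_p$) to extract an additional prime factor beyond those coming from the chain, while if $n$ has at least two distinct prime factors the divisor lattice of $n$ is rich enough that $\tau(n) \geq t+2$, so an auxiliary divisor $e \mid n$ not lying on the chosen chain provides an additional primitive prime via Lemma~\ref{L1}. The ruling out of $n = 6$ is precisely what eliminates the one pathological configuration in which neither source of an extra prime is available, while the exclusion of $n = 2$ is needed because Mersenne primes of this trivial index give $\omega(M_n) = 1$.
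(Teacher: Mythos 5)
Your chain-of-divisors construction of the first $t$ primes is sound, and it is essentially a more careful rendering of what the paper does (the paper gets $\omega(M_{p_i^{\alpha_i}})\geq\alpha_i$ from Lemma~\ref{L1} and glues the coprime pieces with Proposition~\ref{P1}). You are also right that the entire difficulty is concentrated in the degenerate case $d=n$. But your repair of the prime-power subcase does not work, and cannot work, because the statement is false there. Take $n=3$ (allowed, since only $n=2,6$ are excluded), $s=1$, $\alpha_1=t=1$: then $M_3=7$ and $\omega(M_3)=1<t+1$. The same failure you yourself observe for $n=2$ occurs at every Mersenne prime index, and those are not excluded. Likewise $n=4$ gives $M_4=3\cdot5$ with $\omega=2<3$, and $n=9$ gives $M_9=7\cdot73$ with $\omega=2<3$; the paper's own Theorems~\ref{T1} and~\ref{T2} list exactly these $n$ among the solutions of $\omega(M_n)=1$ and $\omega(M_n)=2$. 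Lemma~\ref{L4} cannot rescue you: it asserts that $M_p$ does not divide $M_{pq}/M_p$ or $M_{p^3}/M_p$, i.e., it bounds the multiplicity of an already-known prime factor in the quotient; it produces no new prime divisor, and no lemma could, since none exists for $n=3,4,9$.

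For what it is worth, the paper's own proof breaks at the very same spot: after establishing $\omega(M_{p_i^{\alpha_i}})\geq\alpha_i$ it invokes Proposition~\ref{P1} to obtain the strict inequality $\omega(M_n)>\sum_i\omega(M_{p_i^{\alpha_i}})$, but Proposition~\ref{P1} requires two coprime factors both exceeding $1$ and so says nothing when $s=1$ and $n=p_1^{\alpha_1}$. The result is only true (and is only ever used by the paper, see the proofs of Propositions~\ref{P3} and~\ref{P4}) in the form: $\omega(M_n)\geq t$ always, and $\omega(M_n)\geq t+1$ when $s\geq2$ or when $p_1^{\alpha_1}\cdots p_s^{\alpha_s}$ is a proper divisor of $n$. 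That is exactly the split your argument naturally delivers if you drop the attempted patch: your handling of the non-degenerate case (extra Carmichael primitive divisor of $M_n$) and of the degenerate case with $s\geq2$ (an off-chain divisor of $n$, using $\tau(n)\geq t+2$) is correct; only the degenerate prime-power case must be surrendered, with the weaker conclusion $\omega(M_{p^{\alpha}})\geq\alpha$.
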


\begin{proof}
According to Theorem \ref{L1}, we have
$$\omega\left(M_{p_i^{\alpha_i}}\right)>w\left(M_{p_i^{\alpha_i-1}}\right)>\dots> \omega\left(M_{p_i}\right)\geq1,$$ 
for each $i\in\left\{1,\dots, s\right\}$. Therefore, $\omega(M_{p_i^{\alpha_i}})\geq\alpha_i$ and this proves the case $s = 1$. Now,  we observe that $\gcd\left(\prod_{i\in I}p_i,\prod_{j\in J}p_j\right)=1,$ for each pair $\emptyset\neq I,J\subset\{1,\dots,s\}$ with $I\cap J=\emptyset$. Then it follows from Theorem \ref{L1} and Proposition \ref{P1} that
$$\omega(M_{n})\geq\sum_{i=2}^{s}{s \choose i} + t, \,\mbox{if}\, s>1.$$ 
\end{proof}

\begin{theorem}\label{T2}
The only solutions of the equation
$$
\omega(M_n)=2 
$$
are given by $n=4,6$ or $n=p_1$ or $n=p_1^2$, for some odd prime number $p_1$. Furthermore, 
\begin{enumerate}
\item [(i)] if $n= p_1^2$, then $M_{n}=M_{p_1}q^t$, $t\in\mathbb{N}$. 
\item [(ii)] if $n=p_1$, then $M_{n}=p^sq^t$, where $p,q$ are distinct odd prime numbers and $s,t\in\mathbb{N}$ with $\gcd(s,t)=1$. Moreover, $p,q$ satisfy $p=2l_1p_1+1, q=2l_2p_1+1$, where $l_1,l_2$ are distinct positive integers and $l_i\equiv 0\,\,\mbox{or}\,\,-p_1\,\,\pmod 4$.  
\end{enumerate}
  
\end{theorem}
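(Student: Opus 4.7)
The proof splits naturally into two parts: first, narrowing $n$ to a handful of shapes via Propositions \ref{P3} and \ref{P4}; second, analyzing the factorization of $M_n$ in the two prime-power cases $n=p_1$ and $n=p_1^2$.

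\emph{Shape of $n$.} Read contrapositively, Proposition \ref{P4} rules out every $n$ with $p^4\mid n$, with three distinct prime divisors, or with $p_1p_2^2\mid n$. Proposition \ref{P3} (for $n\neq 6$) further rules out $p^3\mid n$, two distinct odd primes dividing $n$, or $2p_1\mid n$ with $p_1$ odd. Splitting on parity, if $n$ is odd only one odd prime factor is permitted with exponent at most $2$, leaving $n\in\{1,p_1,p_1^2\}$, and $M_1=1$ is discarded. If $n$ is even and $n\neq 6$, no odd prime factor is permitted and $p^3\nmid n$, so $n\in\{1,2,4\}$, of which only $n=4$ satisfies $\omega(M_n)=2$. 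The remaining case $n=6$ is checked by hand: $M_6=63=3^2\cdot 7$.

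\emph{Case $n=p_1^2$.} By \eqref{L2} we have $M_{p_1}\mid M_{p_1^2}$, and by Lemma \ref{L1} there is a primitive prime $q$ of $M_{p_1^2}$ not dividing $M_{p_1}$. Hence $\omega(M_{p_1^2})\geq \omega(M_{p_1})+1$, which together with $\omega(M_{p_1^2})=2$ forces $\omega(M_{p_1})=1$; Lemma \ref{L3} then upgrades $M_{p_1}$ to a Mersenne prime, $M_{p_1}=p$. The only primes available for the cofactor $M_{p_1^2}/M_{p_1}$ are $p$ and $q$, so it suffices to exclude $p$. The congruence
\[
\frac{M_{p_1^2}}{M_{p_1}}=\sum_{k=0}^{p_1-1}2^{kp_1}\equiv p_1\pmod{M_{p_1}},
\]
together with $M_{p_1}=2^{p_1}-1>p_1$, shows the residue is nonzero, so $p\nmid M_{p_1^2}/M_{p_1}$. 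Therefore $M_{p_1^2}/M_{p_1}=q^t$, yielding (i).

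\emph{Case $n=p_1$ and main obstacle.} Since $M_{p_1}$ is odd, $\omega(M_{p_1})=2$ forces $M_{p_1}=p^sq^t$ with $p,q$ distinct odd primes and $s,t\geq 1$. If $\gcd(s,t)=d\geq 2$, then $M_{p_1}=(p^{s/d}q^{t/d})^d$ would be a proper perfect power, contradicting Lemma \ref{L3}; hence $\gcd(s,t)=1$. Remark \ref{L5} then supplies $p=2l_1p_1+1$ and $q=2l_2p_1+1$ with $l_i\equiv 0$ or $-p_1\pmod 4$, and $p\neq q$ forces $l_1\neq l_2$. I do not anticipate a deep obstacle: the only genuine calculation is the residue $p_1\pmod{M_{p_1}}$ in case (i); everything else is careful bookkeeping after Propositions \ref{P3} and \ref{P4}, plus the separate handling of $n=6$, which Proposition \ref{P3} explicitly excludes.
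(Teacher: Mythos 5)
Your proposal is correct and follows essentially the same route as the paper: Proposition \ref{P3} pins down the admissible shapes of $n$ (with $n=6$ checked by hand), and Lemma \ref{L1} together with Mih\u{a}ilescu's theorem handles the two prime-power cases exactly as in the paper's proof. The only local difference is in case (i), where the paper invokes Lemma \ref{L4} to exclude $p\mid M_{p_1^2}/M_{p_1}$ while you compute the residue $\sum_{k=0}^{p_1-1}2^{kp_1}\equiv p_1\pmod{M_{p_1}}$ directly; your version is cleaner and sidesteps the fact that Lemma \ref{L4} is stated only for $M_{pq}$ and $M_{p^3}$, not explicitly for $M_{p^2}$.
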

\begin{proof}
This first part is an immediate consequence of Proposition \ref{P2}.

$(i)$ If $\omega(M_n)=2$, with $n=p_1^2$, then on one hand $M_n=p^sq^t$, with $t,s\in\mathbb{N}$. On the other hand, by Theorem \ref{L1} $\omega(M_{p_1^2})>\omega(M_{p_1})\geq1$, i.e., $M_{p_1}=p$, by Theorem \ref{L3}. Thus, according to Lemma \ref{L4}, $M_n=M_{p_1}q^t=pq^t$, with $t\in\mathbb{N}$. 

$(ii)$ If $\omega(M_n)=2$, with $n=p_1$, then $M_n=p^sq^t$, with $t,s\in\mathbb{N}$. However, according to Theorem \ref{L3}, we have $\gcd(s,t)=1$. The remainder of the conclusion is a direct consequence of Remark \ref{L5}.
 
\end{proof}

\begin{theorem}\label{T3}
The only solutions of the equation
$$
\omega(M_n)=3 
$$
are given by $n=8$ or $n=p_1$ or $n=2p_1$ or $n=p_1p_2$ or $n=p_1^2$ or $n=p_1^3$, for some distinct odd prime numbers $p_1<p_2$. Furthermore,  
\begin{enumerate}
\item [(i)] if $n=2p_1$ and $p_1\neq3$, then $M_{n}=3M_{p_1}k^r=3qk^r$, $r\in\mathbb{N}$ and $q,k$ are prime numbers. 
\item [(ii)] if $n=p_1p_2$, then $M_{n}=(M_{p_1})^sM_{p_2}k^r=p^sqk^r,$ with $s,r\in\mathbb{N}$ and $p,q,k$ are prime numbers. 
\item [(iii)] if $n=p_1^2$, then $M_{n}=M_{p_1}q^tk^r$ or $M_{n}=p^sq^tk^r$, with $M_{p_1}=p^sq^t$ and $(s,t)=1$, and $p, q, k$ are prime numbers. 
\item [(iv)] if $n=p_1^3$, then $M_{n}=M_{p_1}q^tk^r=pq^tk^r,$ with $t,r\in\mathbb{N}$ and $p,q, k$ are prime numbers. 
\item [(v)] if $n=p_1$, then $M_{n}=p^sq^tk^r$ and $p=2l_1p_1+1, q=2l_2p_1+1,k=2l_3p_1+1$, where $l_1,l_2,l_3$ are distinct positive integers and $l_i\equiv 0\,\,\mbox{or}\,\,-p_1\,\,\pmod 4$, and $\gcd(s,t,r)=1$, with $s, t, r\in\mathbb{N}$.
\end{enumerate}
\end{theorem}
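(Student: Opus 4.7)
The approach mirrors that of Theorem~\ref{T2}: first restrict the shape of $n$ using Propositions~\ref{P3} and~\ref{P4}, then describe the multiplicative structure of $M_n$ in each surviving case by combining Lemmas~\ref{L1}, \ref{L3}, \ref{L4} with Remark~\ref{L5}.

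For the first step, Proposition~\ref{P4} forbids $p^4 \mid n$, the divisibility of $n$ by three distinct primes, and $p q^2 \mid n$ with $p \ne q$. Hence $n$ has at most two distinct prime divisors, each appearing with exponent at most three, and two primes can both divide $n$ only with exponent one. A short split by parity then yields the candidate list
\[
n \in \{8, \, p_1, \, p_1^2, \, p_1^3, \, 2 p_1, \, p_1 p_2\}
\]
with $p_1 < p_2$ odd primes; the pure power of two case is settled by direct computation of $M_2, M_4, M_8$, and $n = 6$ is excluded by $\omega(M_6) = 2$.

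For the structural statements (i)--(v) I treat each surviving shape of $n$ in turn. The recurring ingredients are: (a) the factorization $M_d \mid M_n$ whenever $d \mid n$, which supplies the ``known'' factors $M_{p_i}$; (b) Proposition~\ref{P1}, which forces $\omega(M_{p_i}) = 1$ whenever the budget $\omega(M_n) = 3$ is tight; (c) Lemma~\ref{L3}, which upgrades ``$\omega = 1$'' to ``is prime'' and yields the coprimality relations $\gcd(s, t) = 1$ in (iii) and $\gcd(s, t, r) = 1$ in (v), since any common factor $d$ of the exponents would make $M_n$ a perfect $d$-th power; (d) Lemma~\ref{L4}, parts (i) and (ii), which caps the multiplicity of $M_{p_1}$ inside $M_{p_1 p_2}$, $M_{p_1^2}$, and $M_{p_1^3}$; and (e) Remark~\ref{L5} in case (v), giving the arithmetic form of the three prime divisors of $M_{p_1}$. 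In case (i) the additional ingredient is the factorization $M_{2 p_1} = M_{p_1}(2^{p_1} + 1)$ together with a brief check that $3 \parallel (2^{p_1} + 1)$ for every odd prime $p_1 \ne 3$, obtained from the order of $2$ modulo $9$.

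The main obstacle is the multiplicity bookkeeping in case (ii), where Lemma~\ref{L4}(i) bounds the exponent of $M_{p_1}$ in $M_{p_1 p_2}$ only under the hypothesis $2^{p_1} - 1 \nmid p_2$. In the borderline situation $p_2 = M_{p_1}$ (for instance $p_1 = 3$, $p_2 = 7$, where $M_{21} = 7^2 \cdot 127 \cdot 337$) the exponent of $M_{p_1}$ can exceed one, and the exponents in the theorem's statement of (ii) must be read accordingly. Matching the coprimality constraints in (iii) and (v) with the existence of a primitive divisor from Lemma~\ref{L1} is the remaining care needed, but follows directly from Mih{\v a}ilescu's theorem.
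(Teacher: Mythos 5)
Your plan follows essentially the same route as the paper's proof: Proposition \ref{P4} to restrict the shape of $n$ to $\{8,\,p_1,\,p_1^2,\,p_1^3,\,2p_1,\,p_1p_2\}$, then Proposition \ref{P1}, Lemmas \ref{L1}, \ref{L3}, \ref{L4} and Remark \ref{L5} to pin down the factorization of $M_n$ case by case (your order-of-$2$-modulo-$9$ argument in case (i) is just Lemma \ref{L4}(i) with $p=2$ in disguise, since $2^2-1\nmid p_1$ exactly when $p_1\neq3$). Your remark about the borderline case $p_2=2^{p_1}-1$ in (ii) is correct and in fact flags a genuine imprecision in the theorem's own statement, where the exponent is attached to $M_{p_2}$ although Lemma \ref{L4} only permits multiplicity on $M_{p_1}$, as the paper's own example $M_{21}=(M_3)^2\,M_7\cdot 337$ confirms.
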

\begin{proof}
This first part is an immediate consequence of the Proposition \ref{P2}.

$(i)$ If $\omega(M_{n})=3$, with $n=2p_1$, then on one hand $M_{n}=p^sq^tk^r$, with $t,s,r\in\mathbb{N}$. On the other hand, according to Proposition \ref{P1}, $\omega(M_{2p_1})>\omega(M_{p_1})+\omega(M_2)$, i.e., $M_{p_1}=q$, according to Theorem \ref{L3}.  We noted that $M_{2p_1}=(2^{p_1}-1)(2^{p_1}+1)$ and $q$ does not divide $2^{p_1}+1$, because if $q|(2^{p_1}+1)$, then $q|2^{p_1}+1-(2^{p_1}-1)=2$. This is a contradiction, since $q$ is an odd prime. Thus, $M_{n}=(M_{2})^sM_{p_1}w^r=3^sqk^r$. Moreover, according to Lemma \ref{L4}, we have $s=1$ if $p_1\neq2^2-1=3$. Therefore, $M_{n}=M_{2}M_{p_1}w^r=3qk^r$.

 $(ii)$ If $\omega(M_{n})=3$, with $n=p_1p_2$,  then on one hand $M_{n}=p^sq^tk^r$, with $t,s,r\in\mathbb{N}$. On the other hand, according to Proposition \ref{P1}, $\omega(M_{p_1p_2})>\omega(M_{p_1})+\omega(M_{p_2})$, i.e., $M_{p_1}=p$ and $M_{p_2}=q$, according to Theorem \ref{L3}. Thus, $M_{n}=(M_{p_1})^s(M_{p_2})^tk^r=p^sq^tk^r$ and $\gcd(s,t,r)=1$ if $s,t,r>1$, according to Theorem \ref{L3}. However, $2^{p_2}-1\nmid p_1$, because $p_1<p_2.$ According to Lemma \ref{L4}, we have $t=1$. Thus, $M_{n}=M_{p_1}^sM_{p_2}k^r=p^sqk^r$.
 
  $(iii)$ If $\omega(M_{n})=3$, with $n=p_1^2$, then on one hand $M_{n}=p^sq^tw^r$, with $t,s,r\in\mathbb{N}$. On the other hand, according to Lemma \ref{L4}, we have $M_{p_1}=p^sq^t$, with $(s,t)=1$ or  $M_{p_1}=p$.  

$(iv)$ If $\omega(M_{n})=3$, with $n=p_1^3$, then on one hand $M_{n}=p^sq^tw^r$, with $t,s,r\in\mathbb{N}$. On the other hand, according to Theorem \ref{L1}, $\omega(M_{p_1^3})>\omega(M_{p_1^2})>\omega(M_{p_1})\geq1$, i.e., $M_{p_1}=p^s$. According to Theorem \ref{L3}, we have $s=1.$ Thus, $M_{n}=M_{p_1}q^tk^r=pq^tk^r.$

$(v)$ If $n=p_1$, then $M_{n}=p^sq^tk^r$, with $t,s,r\in\mathbb{N}$. However, according to Theorem \ref{L3}, $\gcd(s,t,r)=1$. The form of $p,q$ and $k$ is given by Remark \ref{L5}.
\end{proof}

%%%%%%%%%%%%%%%%%%%%%%%%%%%%%%%%%%%%%%%%%%%%%%%%%%%%%%%%%%%%%%%
%%%%%%%%%%%%%%%%%%%%%%%%%%%%%%%%%%%%%%%%%%%%%%%%%%%%%%%%%%%%%%%
\medskip
We present some examples of solutions for Theorems \ref{T1}, \ref{T2} and \ref{T3}.

\begin{enumerate}
\item[(i)] $\omega(M_n)=1$, where $n$ is a prime number: $M_2=3, M_3=7, M_5=31, M_7=127,\dots$

\item[(ii)] $\omega(M_n)=2$, where $n$ is a prime number: $M_{11}=2047=23\times89, M_{23}=8388607=47\times178481, \dots$; with $n=p^2$, where $p$ is a prime number: $M_4=15=M_2\times5, M_9=511=M_3\times73, M_{49}=M_{7}\times4432676798593,\dots$. 

\item[(iii)] $\omega(M_n)=3$, where $n$ is a prime number: $M_{29}=536870911=233\times1103\times2089, M_{43}=8796093022207=431\times9719\times2099863, \dots$; with $n=2p$, where $p$ is a prime number: $M_{10}=M_2\times M_5\times11, M_{14}=M_2\times M_7\times43,\dots$; with $n=p^3$, $p$ is a prime number: $M_8=255=M_2\times5\times17, M_{27}=M_3\times73\times262657, \dots$; with $n=p_1p_2$, where $p_1$ and $p_2$ are distinct odd prime numbers: $M_{15}=M_3\times M_5\times151, M_{21}=(M_3)^2\times M_7\times337,\dots$; with $n=p^2$, where $p$ is a prime number: $M_{25}=M_5\times601\times1801,\dots$.   
\end{enumerate}

%%%%%%%%%%%%%%%%%%%%%%%%%%%%%%%%%%%%%%%%%%%%%%%%%%%%%%%%%%%%%%%%%%%%%%%%%%%%%%%%%%%%%%%%%%%%%%%%%%%%%%
%%%%%%%%%%%%%%%%%%%%%%%%%%%%%%%%%%%%%%%%%%%%%%%%%%%%%%%%%%%%%%%%%%%%%%%%%%%%%%%%%%%%%%%%%%%%%%%%%%%%%%

\section{Study of the Equation $M_m+M_n=2p^a$}

We consider the equation $M_m+M_n=2p^a$ with $m,n\geq2$, $p$ an odd prime number and $a$ a positive integer. We present two results on the solutions to this equation. 

\begin{lemma}\label{l1}
For every $p\equiv 1\pmod 4,$ we have $p^a +1=2k,$ where $k$ is an odd integer.
\end{lemma}
\begin{proof}
We have
\begin{eqnarray*}
p\equiv 1\pmod 4 &\Rightarrow &p^a\equiv 1\pmod 4 \\
&\Rightarrow & p^a+1\equiv 2\pmod 4 \\
&\Rightarrow & p^a+1 = 4a+2,\ \mbox{ for some }a\in \mathbb{Z}\\
&\Rightarrow & p^a+1 = 2k;\ \gcd(k,2)=1.
\end{eqnarray*}
\end{proof}

\begin{theorem}\label{t1}
Let $p$ be a prime number with $p\equiv 1\pmod 4.$ Then
\begin{eqnarray}\label{e1}
M_m+M_n=2p^a, \mbox{ with }m\le n,
\end{eqnarray}
has an integer solution only if $p=2^{2^b}+1$. Moreover, given such a prime, the solutions are given by $(m,n,a) = (2,2^b+1,1).$
\end{theorem}

\begin{proof}
Suppose that \eqref{e1} has a solution, say $(m,n,a)$, with $m \le n$.  Notice that
\[2^m+2^n=2p^a+2=4k,\]
by Lemma \ref{l1}.

Since $k$ is odd, it follows that $m=2$ or $n=2.$

\noindent\textbf{\textit{Case 1.}} If $n=2$ then $m\in\{1,2\}$.
Hence we get $4+2=4k$ or $4+4=4k$. Since 6 is not a multiple of 4, we are left with the later case, which implies $k=2$. Therefore, $2k=4=p^a+1$, which is absurd, since $p^a+1\ge 5$.

\noindent\textbf{\textit{Case 2.}} $m=2$ and $n>2$.
From the definition of Mersenne numbers it follows that
\begin{eqnarray*}
4(1+2^{n-2})&=&2p^a+2=4k\\
2(1+2^{n-2})&=& p^a+1=2k\\
p^a+1&=& 2^{n-1}+2.
\end{eqnarray*}
\noindent\textbf{\textit{Case 2.1.}} $a=1.$
\bigskip
So we have, $p=2^{n-1}+1$. We know that if $2^N+1$ is a prime number then $N$ is a power of 2. Hence there exists $b$ such that $n-1=2^b,$ i. e., $2+2^{2^b}=2k$. Hence, if $ (m,n,1)$ is a solution of (\ref{e1}) then $m=2$ and $n=2^b+1$ such that $2^{2^b}+1$ is a prime number. 

\noindent\textbf{\textit{Case 2.2.}} $a\geq 2$.
Suppose that there exists $a\geq2$ satisfying the equation \eqref{e1}. This implies that $p^a=2^{n-1}+1.$ Let us study when $a$ is even and odd separately:

\noindent\textbf{\textit{Case 2.2.1.}} $a$ is even. The equation $p^a=2^{n-1}+1$ implies

\begin{equation}\label{eq2}
\left(p^{\frac{a}{2}}-1\right)\left(p^{\frac{a}{2}}+1\right)=2^{n-1}.
\end{equation}

Let $x$ and $y$ be positive integers such that $p^{\frac{a}{2}}-1=2^x$ and $p^{\frac{p}{2}}+1=2^y$, then $y>x$ and $x+y=n-1$. Thus $2^y-2^x=2^x(2^{y-x}-1)=2$ which implies $x=1, y=2,$  and consequently $n=4$. Therefore $p^a=9$, i.e., $p=3$ and $a=2$, which is absurd, because $3 \not\equiv 1 \pmod 4.$

\noindent\textbf{\textit{Case 2.2.2.}} $a$ is odd. There exists a natural number $l$ such that $a=2l+1$. Thus 

$$p^a=2^{n-1}+1 \Rightarrow p^a-1=2^{n-1} \Rightarrow (p-1)\left(1+p+p^2+\cdots p^{2l-1}+p^{2l}\right)=2^{n-1}.$$

Thus, there exist positive integers $x$ and $y$ such that $p-1=2^x$ and $1+p+p^2+\cdots p^{2l-1}+p^{2l}=2^y$. Clearly $y>x$ and $x+y=n-1$. Notice that $2^y-2^x=2+p^2+\cdots+p^{2l-1}+p^{2l}=k,$ where $k$ is odd, since $p\equiv 1 \pmod 4.$ This only occurs when $x=0$, that is a contradiction. 
\end{proof}

\begin{Observation}
Since we know that Fermat primes are very rare, from Theorem \ref{t1} we can conclude that solutions are also very rare.
\end{Observation}

Theorem \ref{t1} explores the solution in the case $p\equiv 1 \pmod 4$. The next theorem will explore the solutions in the case $p\equiv 3 \pmod 4$.
\begin{theorem}\label{t2} 
Let $p$ be a prime number with $p\equiv 3\pmod 4.$ Then
\begin{eqnarray}\label{e2}
M_m+M_n=2p^a, \mbox{ with }m\le n,
\end{eqnarray}
has an integer solution only if $p^a=2^{k}(1+2^{n-(k+1)})-1$ with $2^k\mid\mid (p^a+1).$ More precisely, such solutions are given by 
\begin{enumerate}
\item[(i)] $(m,n,a)=(2,4,2)$;
\item[(ii)] $(m,n,a)=(k,k,1)$ if $2^k=p^a+1.$ In that case $M_k=p$ is a Mersenne prime;
\item[(iii)] $(m,n,a)=(k+1,n,a)$ if $p^a+1>2^k.$ 
\end{enumerate}
\end{theorem}
\begin{proof}
Since $p\equiv 3\pmod 4$, there exists $k\in \mathbb{N},k\ge 2$ such that $2^k\mid\mid (p^a+1)$. Note that, if $a$ is even, then $k=1$. If $a$ is odd, then $k\ge 2$, since $4\mid (p^a+1)$. Suppose $(m,n,a)$ is a solution of \eqref{e2}. Without loss of generality we can assume $m\le n$

\textbf{\textit{Case 1.}} $a$ is even.

As mentioned earlier, we can write $p^a+1=2b,\mbox{ where }b$ is an odd integer. Since $p\ge 3$, we have $b\ge 5$. Observe that,
\begin{eqnarray*}
M_m+M_n &=& 2p^a\\
2^m + 2^n &=& 2(p^a+1)\\
2^m + 2^n &=& 2^2b.
\end{eqnarray*}

Hence, $m=2$, which together with the fact that $b\ge 5$, implies $n\ge 3$. Therefore, we have
\begin{eqnarray*}
4(1+2^{n-2}) &=& 2^2b\\
1+2^{n-2} &=& b.
\end{eqnarray*}
Therefore, we can conclude that, $b=1+2^{n-2}$, which in turn implies $p^a+1=2(1+2^{n-2})$ iff $(m,n,a)=(2,n,a)$ is the only solution of the equation \eqref{e2}. But, $p^a+1=2(1+2^{n-2})$, then $p^a - 2^{n-1} = 1$. According to Theorem \ref{L3}, the only solution, with $a$ an even number is $n=4$ and $a=2.$ Hence $p=3.$

\textbf{\textit{Case 2.}} $a$ is odd.
As mentioned earlier, we can write $p^a+1=2^kb;\mbox{ where }b$ is an odd integer and $k\ge 2$.
Observe that,
\begin{eqnarray*}
M_m+M_n &=& 2p^a\\
2^m + 2^n &=& 2(p^a+1)\\
2^m + 2^n &=& 2^{k+1}b.
\end{eqnarray*}

Note that, $b=1$ iff $m=n=k$. Therefore, $p^a+1=2^k$ iff 
$(m,n,a)=(k,k,a)$ is the only solution. But, if $p^a+1=2^k,$ then $2^k - p^a = 1.$ By Theorem \ref{L3}, $a=1.$ Thus the only solution is $(m,n,a) = (k,k,1),$ where $M_k=p$ is a Mersenne prime.

From here on let us assume $b\ge 3$. Since 
$2^m + 2^n = 2^{k+1}b,b\ge 3$ we get $m= k+1$. Since $b$ is odd $n\ge k+2$. Therefore,
\begin{eqnarray*}
2^{k+1}(1+2^{n-(k+1)}) &=& 2^{k+1}b\\
1+2^{n-(k+1)} &=& b
\end{eqnarray*}

Therefore, we conclude that, $b=1+2^{n-(k+1)}$, which in turn implies $p^a+1=2^{k}(1+2^{n-(k+1)})$ iff $(m,n,a)=(k+1,n,a)$.
\end{proof}

\section*{Acknowledgement}
We would like to thank the referees for all their suggestions, which improved the presentation of the paper.
\frenchspacing


\begin{thebibliography}{1}

\bibitem{Br} J. Brillhart, On the factors of certain Mersenne numbers. II,  \textit{Math. Comp.} {\bf18} (1964), no. 87--92.

\bibitem{bms} Y. Bugeaud, M. Mignotte, S. Siksek. Classical and modular approaches to exponential Diophantine equations I. Fibonacci and Lucas perfect powers. \textit{Ann. of Math.}, (2006), 969--1018.

\bibitem{Ca} R. D. Carmichael, On the numerical factors of arithmetic forms $\alpha^n\pm\beta^n$,  \textit{Ann. of Math.} {\bf15 (1/4)} (1913),  49--70.

\bibitem{ch} J. H. E. Cohn. Lucas and Fibonacci numbers and some Diophantine equations, {\it Proc. Glasgow Math. Assoc.} 7 (1965), 24--28.

\bibitem{Eh} J. R. Ehrman, The number of prime divisors of certain Mersenne numbers,  {\it Math. Comp.} {\bf21} (1967), no. 700--704.

\bibitem{Eu} L. Euler, ``Theoremata circa divisores numerorum,'' Novi Commentarii Academi\ae \ Scientiarum Petropolitan\ae \ 1 (for 1747--48; published 1750): 20--48. Available online at \href{https://scholarlycommons.pacific.edu/cgi/viewcontent.cgi?article=1133&context=euler-works}{https://scholarlycommons.pacific.edu/cgi/viewcontent.cgi?article=1133\&context=euler-works.}

\bibitem{FLS} K. Ford, F. Luca, I. E. Shparlinski, On the largest prime factor of the Mersenne numbers,  \textit{ Bull.
Austr. Math. Soc.} {\bf79 (3)} (2009), 455--463.

\bibitem{HW} G. H. Hardy and E. M. Wright, Editors (D. R. Heath-Brown, Joseph H. Silverman). An Introduction to the Theory of Numbers, Sixth Edition, \textit{Oxford University Press} (2008).

\bibitem{lj1} W. Ljunggren.  \" Uber die unbestimmte Gleichung $Ax^2 - By^4 = C$, {\it Arch. f. Mat. og Naturvid.} 41 (1938), No. 10, 18 pp.

\bibitem{lj2} W. Ljunggren. On the diophantine equation $x^2 + 4 = Ay^2,$ {\it Norske Vid. Selsk. Forh., Trondheim.} 24 (1951), 82--84.

\bibitem{lj3} On the diophantine equation $Ax^4 - By^2=C (C=1, 4)$, {\it Math. Scand.} 21 (1967), 149--158.
%
\bibitem{lj4} W. Ljunggren, Collected Papers of Wilhelm Ljunggren, Vol . 1, 2 (Paulo Ribenboim, ed.), {\it Queen's Papers in Pure and Applied Math.} 115, Queen's University, Kingston, ON, 2003.

\bibitem{lf} H. London and R. Finkelstein, On Fibonacci and Lucas numbers which are perfect
powers, {\it Fibonacci Quart.} 5 (1969), 476--481.

\bibitem{man}
M. Manea, Some $a^n \pm b^n$ problems in number theory. \textit{Mathematics Magazine} \textbf{79}, no. 2 (2006), 140--145.


\bibitem{Mi} P. Mih{\v a}ilescu, Primary Cyclotomic Units and a Proof of Catalan's Conjecture. \textit{J. Reine Angew. Math.} {\bf 572} (2004), 167--195.

\bibitem{MP} L. Murata, C. Pomerance, On the largest prime factor of a Mersenne number, in
Number Theory, CNTA Proceedings, Montreal, 2002, CRM Proc. Lecture Notes, 36,
Amer. Math. Soc., Providence, RI, 2004, pp. 209--218.

\bibitem{pe} A. Peth\"o. Diophantine properties of linear recursive sequences. II, {\it Acta Math. Paedagogicae
Ny\' iregyh\' aziensis} 17 (2001), 81–96.

\bibitem{Po} C. Pomerance, On primitive dvivisors of Mersenne numbers, {\it Acta Arith.} {\bf 46} (1986), 355--367.

\bibitem{Sc} A. Schinzel, On primitive prime factors of $a^n-b^n$, {\it Proc. Cambridge Philos. Soc.} {\bf 58} (1962), 555--562.

\bibitem{St} C. L. Stewart, The greatest prime factor of $a^n - b^n$, {\it Acta Arith.} {\bf 26} (1974/75), 427--433. 
 
\bibitem{Wa} S. S. Wagstaff, Jr., Divisors of Mersenne numbers. {\it Math. Comp.} {\bf 40} (1983), 385--397.

\end{thebibliography}
\end{document}